\newtheorem{theorem}{Theorem}[section]
\newtheorem{lemma}[theorem]{Lemma}
\newtheorem{proposition}[theorem]{Proposition}
\numberwithin{equation}{section}
\def\C{{\mathbb C}}
\def\N{{\mathbb N}}
\def\R{{\mathbb R}}
\def\E{{\mathbb E}}
\def\eps{\varepsilon}
\newcommand{\abs}[1]{\left\lvert#1\right\rvert}
\begin{document}

\title[Real zeros of random orthogonal polynomials]{Expected number of real zeros for random linear combinations of orthogonal polynomials}

\author{D. S. Lubinsky}
\thanks{Research of D. S. Lubinsky was partially supported by NSF grant DMS136208 and US-Israel BSF grant 2008399.}
\address{School of Mathematics, Georgia Institute of Technology, Atlanta, GA 30332, USA}
\email{lubinsky@math.gatech.edu}

\author{I. E. Pritsker}
\thanks{Research of I. E. Pritsker was partially supported by the National Security Agency (grant H98230-12-1-0227) and by the AT\&T Foundation.}
\address{Department of Mathematics, Oklahoma State University, Stilwater, OK 74078, USA}
\email{igor@math.okstate.edu}

\author{X. Xie}
\address{Department of Mathematics, Oklahoma State University, Stilwater, OK 74078, USA}
\email{sophia.xie@okstate.edu}

\subjclass[2010]{Primary: 30C15; Secondary: 30B20, 60B10.}

\date{}

\commby{Walter Van Assche}


\begin{abstract}
We study the expected number of real zeros for random linear combinations of orthogonal polynomials. It is well known that Kac polynomials, spanned by monomials with i.i.d. Gaussian coefficients, have only  $(2/\pi + o(1))\log{n}$ expected real zeros in terms of the degree $n$. On the other hand, if the basis is given by Legendre (or more generally by Jacobi) polynomials, then random linear combinations have $n/\sqrt{3} + o(n)$ expected real zeros. We prove that the latter asymptotic relation holds universally for a large class of random orthogonal polynomials on the real line, and also give more general local results on the expected number of real zeros.
\end{abstract}

\maketitle

\section{Background}

Zeros of polynomials with random coefficients have been intensively studied since 1930s. The early work concentrated on the expected number of real zeros $\E[N_n(\R)]$ for polynomials of the form $P_n(x)=\sum_{k=0}^{n} c_k x^k,$ where $\{c_k\}_{k=0}^n$ are independent and identically distributed random variables. Apparently the first paper that initiated the study is due to Bloch and P\'olya \cite{BP}. They gave an upper bound $\E[N_n(\R)] = O(\sqrt{n})$ for polynomials with coefficients selected from the set $\{-1,0,1\}$ with equal probabilities. Further results generalizing and improving that estimate were obtained by Littlewood and Offord \cite{LO1}-\cite{LO2}, Erd\H{o}s and Offord \cite{EO} and others. Kac \cite{Ka1} established the important asymptotic result
\[
\E[N_n(\R)] = (2/\pi + o(1))\log{n}\quad\mbox{as }n\to\infty,
\]
for polynomials with independent real Gaussian coefficients. More precise forms of this asymptotic were obtained by many authors, including Kac \cite{Ka2}, Wang \cite{Wa}, Edelman and Kostlan \cite{EK}. It appears that the sharpest known version is given by the asymptotic series of Wilkins \cite{Wi1}. Many additional references and further directions of work on the expected number of real zeros may be found in the books of Bharucha-Reid and Sambandham \cite{BRS}, and of Farahmand \cite{Fa}. In fact, Kac \cite{Ka1}-\cite{Ka2} found the exact formula for $\E[N_n(\R)]$ in the case of standard real Gaussian coefficients:
\[
\E[N_n(\R)]=\frac{4}{\pi}\displaystyle \int_0^1 \frac{\sqrt{A(x)C(x)-B^2(x)}}{A(x)} \, dx,
\]
where
\[
A(x)=\sum_{j=0}^n x^{2j},\quad
B(x)=\sum_{j=1}^n j x^{2j-1}\quad\mbox{and}\quad
C(x)=\sum_{j=1}^n j^2 x^{2j-2}.
\]
In the subsequent paper Kac \cite{Ka3}, the asymptotic result for the number of real zeros was extended to the case of uniformly distributed coefficients on $[-1, 1]$. Erd\H{o}s and Offord \cite{EO} generalized the Kac asymptotic to Bernoulli distribution (uniform on $\{-1,1\}$), while Stevens \cite{St} considered a wide class of distributions. Finally, Ibragimov and Maslova \cite{IM1,IM2} extended the result to all mean-zero distributions in the domain of attraction of the normal law.

We state a result on the number of real zeros for the random linear combinations of rather general functions. It originated in the papers of Kac \cite{Ka1}-\cite{Ka3}, who used the monomial basis, and was extended to trigonometric polynomials and other bases, see Farahmand \cite{Fa} and Das \cite{Da1}-\cite{Da2}. We are particularly interested in the bases of orthonormal polynomials, which is the case considered by Das \cite{Da1}. For any set $E\subset\C$, we use the notation $N_n(E)$ for the number of zeros of random functions \eqref{1.1} (or random orthogonal polynomials of degree at most $n$) located in $E$. The expected number of zeros in $E$ is denoted by $\E[N_n(E)],$ with $\E[N_n(a,b)]$ being the expected number of zeros in $(a,b)\subset\R.$

\begin{proposition} \label{prop1.1}
Let $[a,b]\subset\R$, and consider real valued functions $g_{j}(x)\in C^1([a,b]), \ j=0,\ldots,n,$ with $g_{0}(x)$ being a nonzero constant. Define the random function
\begin{equation} \label{1.1}
G_n(x)=\sum_{j=0}^{n} c_{j}g_{j}(x),
\end{equation}
where the coefficients $c_j$ are i.i.d. random variables with Gaussian distribution $\mathcal{N}(0, \sigma^2), \sigma > 0$. If there is $M\in\N$ such that $G_n'(x)$ has at most $M$ zeros in $(a,b)$ for all choices of coefficients, then the expected number of real zeros of $G_n(x)$ in the interval $(a,b)$ is given by
\begin{equation} \label{1.2}
\E[N_n(a,b)]=\frac{1}{\pi} \int_a^b \frac{\sqrt{A(x)C(x)-B^2(x)}}{A(x)} \, dx
\end{equation}
where
\begin{align} \label{1.3}
A(x)=\sum_{j=0}^n g_j^{2}(x), \quad B(x)=\sum_{j=1}^n g_j(x)g_j'(x) \quad \mbox{and}\quad C(x)=\sum_{j=1}^n [g_j'(x)]^2.
\end{align}
\end{proposition}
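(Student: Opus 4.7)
The plan is to deduce this identity from the classical Kac--Rice formula for smooth Gaussian processes. Since $G_n$ is a linear combination of i.i.d.\ Gaussians with deterministic $C^1$ weights, the pair $(G_n(x),G_n'(x))$ is jointly Gaussian with mean zero for every fixed $x\in(a,b)$. First I would compute its covariance matrix directly from \eqref{1.1}, obtaining
\[
\Sigma(x)=\sigma^2\begin{pmatrix}A(x) & B(x)\\ B(x) & C(x)\end{pmatrix},
\]
where $g_0'\equiv 0$ lets the sums defining $B$ and $C$ equivalently start at $j=0$ or $j=1$. Since $g_0$ is a nonzero constant, $A(x)>0$ on $[a,b]$, so the marginal density of $G_n(x)$ is well defined.

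Next I would invoke Kac's counting representation
\[
N_n(a,b)\;=\;\lim_{\eps\to 0^+}\frac{1}{2\eps}\int_a^b \mathbf{1}_{[|G_n(x)|<\eps]}\,|G_n'(x)|\,dx,
\]
which holds almost surely because $G_n$ has only simple zeros in $(a,b)$ and does not vanish at the endpoints off a probability-zero event. On each such realization, the hypothesis that $G_n'$ has at most $M$ zeros in $(a,b)$ ensures that $|G_n|$ is monotone on each of at most $M+1$ subintervals, which makes the pointwise limit elementary. The main technical step is then to take expectations and interchange the limit with the $x$-integral and with $\E$: the deterministic bound $M$ supplies the coefficient-independent estimate of the inner integral by a multiple of $\eps$ that is needed for dominated convergence.

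Once this interchange is justified, the Kac--Rice intensity at $x$ factors as
\[
\E\!\left[\delta(G_n(x))\,|G_n'(x)|\right] \;=\; p_{G_n(x)}(0)\cdot\E\!\left[|G_n'(x)|\,\big|\,G_n(x)=0\right].
\]
The marginal density at $0$ equals $(2\pi\sigma^2 A(x))^{-1/2}$, while the conditional law of $G_n'(x)$ given $G_n(x)=0$ is centered Gaussian with variance $\sigma^2(C(x)-B(x)^2/A(x))$, so its conditional mean absolute value is $\sqrt{2\sigma^2(C-B^2/A)/\pi}$. Multiplying these two factors yields the integrand $\pi^{-1}\sqrt{AC-B^2}/A$ of \eqref{1.2}, and integration over $(a,b)$ completes the proof. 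The only genuinely delicate point is thus the uniform control required to pass to the limit under the expectation; everything else is a routine bivariate-Gaussian calculation, and identifying the role of the $M$-zero hypothesis is the part of the argument I would be most careful to write out.
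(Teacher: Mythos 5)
Your argument is correct in outline, but it takes a genuinely different route from the paper. The paper follows Kac's original Fourier-analytic method: it invokes Kac's counting lemma to represent the (count-multiple-zeros-once, endpoints-as-one-half) number of zeros as a principal-value double integral of $\cos(yG_n(x))\,|G_n'(x)|$, then evaluates the resulting Gaussian integrals explicitly via characteristic functions, a Taylor expansion in $u$, and term-by-term integration, arriving at $\frac{1}{\pi}\sqrt{AC-B^2}/A$ after summing a binomial-type series. You instead use the Kac--Rice/co-area route: the Banach-indicatrix identity $\int_a^b \mathbf{1}_{[|G_n|<\eps]}|G_n'|\,dx=\int_{-\eps}^{\eps}\#\{x:G_n(x)=u\}\,du$, dominated convergence via the deterministic bound $M+1$ supplied by Rolle's theorem (you correctly identify this as where the hypothesis on $M$ enters --- the paper uses it for the same purpose, namely integrability and the interchange of integrals), and then the bivariate-Gaussian factorization $p_{G_n(x)}(0)\,\E[|G_n'(x)|\mid G_n(x)=0]$, whose two factors you compute correctly ($A(x)\geq g_0^2>0$ guarantees nondegeneracy of the marginal). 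Your route is shorter and conceptually cleaner, and it generalizes beyond Gaussian coefficients; the paper's route has the advantage that Kac's lemma builds the counting convention for degenerate zeros directly into a deterministic identity. The one point you should not leave as an assertion is that almost surely all zeros of $G_n$ in $(a,b)$ are simple: your pointwise limit identifies $\lim_{\eps\to 0}\frac{1}{2\eps}\int_{-\eps}^{\eps}\#\{G_n=u\}\,du$ with $N_n(a,b)$ only on that event, and the claim requires a Bulinskaya-type argument (partition $[a,b]$, use uniform continuity of $G_n'$ and the uniform bound $(2\pi\sigma^2 g_0^2)^{-1/2}$ on the density of $G_n(x)$); this is true here precisely because $g_0$ is a nonzero constant, but it is a lemma, not an observation, and the paper's choice of Kac's lemma is what lets it sidestep this step entirely.
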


Clearly, the original formula of Kac follows from this proposition for $g_j(x)=x^j,\ j=0,1,\ldots,n.$ We sketch a proof of Proposition \ref{prop1.1} in Section 3, as we could not find a suitable reference with a complete proof for Proposition \ref{prop1.1} in this general form. We note that multiple zeros are counted only once by the standard convention in all of the above results on real zeros. However, the probability of having a multiple zero for a polynomial with Gaussian coefficients is equal to 0, so that we have the same result on the expected number of zeros regardless whether they are counted with or without multiplicities.

\section{Random orthogonal polynomials}
Let $\mu$ denote a positive Borel measure compactly supported on the real line, with infinitely many points in its support, and with finite power moments of all orders. For $n\geq 0$, let
\begin{equation*}
p_{n}\left( x \right) =\gamma _{n}x^{n}+...
\end{equation*}%
denote the $n$th orthonormal polynomial for $\mu $, with $\gamma_n>0,$ so that
\begin{equation*}
\int p_{n}p_{m}d\mu =\delta _{mn}.
\end{equation*}%
Using the orthonormal polynomials $\{p_j\}_{j=0}^{\infty}$ as the basis, we consider the ensemble of random polynomials of the form
\begin{equation} \label{2.1}
P_n(x)=\sum_{j=0}^{n}c_{j}p_{j}(x),\quad n\in\N,
\end{equation}
where the coefficients $c_0,c_1,\ldots,c_n$ are i.i.d. random variables. Such a family is often called random orthogonal polynomials. If the coefficients have Gaussian distribution, one can apply Proposition \ref{prop1.1} to study the expected number of real zeros of random orthogonal polynomials. In particular, Das \cite{Da1} considered random Legendre polynomials, and found that $\E[N_n(-1,1)]$ is asymptotically equal to $n/\sqrt{3}$. Wilkins \cite{Wi2} improved the error term in this asymptotic relation by showing that $\E[N_n(-1,1)] = n/\sqrt{3} + o(n^\eps)$ for any $\eps>0.$ For random Jacobi polynomials, Das and Bhatt \cite{DB} concluded that $\E[N_n(-1,1)]$ is asymptotically equal to $n/\sqrt{3}$ too. They also provided estimates for the expected number of real zeros of random Hermite and Laguerre polynomials, but those arguments contain significant gaps. Farahmand \cite{Fa,Fa1,Fa2} considered various generalizations of these results for the level crossings of random sums of Legendre polynomials with coefficients that may have different distributions. Interesting computations and pictures of zeros of random orthogonal polynomials may be found on the \textsc{chebfun} web page of Trefethen \cite{Tr}.

For the orthonormal polynomials $\{p_j(x)\}_{j=0}^{\infty}$ associated with the measure $\mu$, define the reproducing kernel by
\[
K_n(x,y)=\sum_{j=0}^{n-1}p_j(x)p_j(y),
\]
and the differentiated kernels by
\[
K_n^{(k,l)}(x,y)=\sum_{j=0}^{n-1}p_j^{(k)}(x)p_j^{(l)}(y),\quad k,l\in\N\cup\{0\}.
\]
The strategy is to apply Proposition \ref{prop1.1} with $g_j=p_j$, so that
\begin{align} \label{2.2}
A(x) = K_{n+1}(x,x), \quad B(x) = K_{n+1}^{(0,1)}(x,x) \quad \mbox{and}\quad C(x) = K_{n+1}^{(1,1)}(x,x).
\end{align}
We use universality limits for the reproducing kernels of orthogonal polynomials (see Lubinsky \cite{Lu1}-\cite{Lu2} and Totik \cite{To1}-\cite{To2}), and asymptotic results on zeros of random polynomials (cf. Pritsker \cite{Pr}) to give asymptotics for the expected number of real zeros for a wide class of random orthogonal polynomials.

\begin{theorem} \label{thm2.1}
Let $K\subset\R$ be a finite union of closed and bounded intervals, and let $\mu$ be a positive Borel measure supported on $K$ such that $d\mu(x)=w(x)dx$ and $w>0$ a.e. on $K$. If for every $\eps>0$ there is a closed set $S\subset K$ of Lebesgue measure $|S|<\eps$, and a constant $C>1$ such that $C^{-1} < w < C$ a.e. on $K\setminus S,$ then the expected number of real zeros of random orthogonal polynomials \eqref{2.1} with Gaussian coefficients satisfy
\begin{align} \label{2.3}
\lim_{n\to\infty} \frac{1}{n} \E[N_n(\R)]= \frac{1}{\sqrt{3}}.
\end{align}
\end{theorem}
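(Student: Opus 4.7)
The plan is to apply Proposition \ref{prop1.1} with $g_j = p_j$, extract the resulting Kac--Rice integral (1.2) with $A$, $B$, $C$ as in (2.2), and then asymptotically evaluate the integrand by plugging in sine-kernel universality for the reproducing kernel $K_n$. Since $P_n$ has degree at most $n$, $P_n'$ has at most $n-1$ real zeros on any interval, independent of the coefficients, so the hypothesis of Proposition \ref{prop1.1} is satisfied on every finite $(a,b)\subset\R$, giving
\begin{equation*}
\E[N_n(a,b)] = \frac{1}{\pi}\int_a^b \frac{\sqrt{K_{n+1}(x,x)\,K_{n+1}^{(1,1)}(x,x) - [K_{n+1}^{(0,1)}(x,x)]^2}}{K_{n+1}(x,x)}\,dx.
\end{equation*}

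I would then invoke the Lubinsky--Totik universality theorems (\cite{Lu1}--\cite{Lu2}, \cite{To1}--\cite{To2}), whose applicability is exactly what the local boundedness hypothesis $C^{-1}<w<C$ on $K\setminus S$ is designed to secure. These give, at a.e.\ $x\in K\setminus S$,
\begin{equation*}
\lim_{n\to\infty}\frac{K_{n+1}\!\left(x+\tfrac{u}{n\omega_K(x)},\;x+\tfrac{v}{n\omega_K(x)}\right)}{K_{n+1}(x,x)} = \frac{\sin\pi(u-v)}{\pi(u-v)}
\end{equation*}
uniformly for $(u,v)$ in compact subsets of $\R^2$, where $\omega_K$ denotes the density of the equilibrium measure of $K$. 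Since $K_n(\cdot,\cdot)$ is polynomial in each variable, the convergence extends to partial derivatives (via Cauchy estimates applied to complex extensions, or Markov-type inequalities). Using $\frac{\sin\pi t}{\pi t} = 1 - \frac{(\pi t)^2}{6} + O(t^4)$ and differentiating in $u,v$ at $0$, one obtains
\begin{equation*}
\frac{K_{n+1}^{(0,1)}(x,x)}{n\,K_{n+1}(x,x)}\longrightarrow 0,\qquad \frac{K_{n+1}^{(1,1)}(x,x)}{n^2\,K_{n+1}(x,x)}\longrightarrow \frac{\pi^2\,\omega_K^2(x)}{3}.
\end{equation*}
Consequently $\frac{1}{\pi n}\frac{\sqrt{AC-B^2}}{A}\to \frac{\omega_K(x)}{\sqrt 3}$ pointwise a.e.\ on $K\setminus S$, and since $\int_K \omega_K\,dx = 1$, formal integration yields the target $n/\sqrt 3$.

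The main obstacle is justifying the passage to the limit under the integral and handling zeros outside $K$. For the first, I would establish an $L^1(K)$ majorant for $(\pi n)^{-1}\sqrt{AC-B^2}/A$ by combining the Cauchy--Schwarz bound $\sqrt{AC-B^2}/A \le \sqrt{C/A}$ with Bernstein--Markov-type estimates for orthonormal polynomials that control $C/A$ by $O(n^2)$ on $K\setminus S$, so that dominated convergence applies there. The contribution of the exceptional set $S$ is bounded by $|S|$ times a uniform polynomial majorant and made arbitrarily small by choosing $\eps$ small and then letting $n\to\infty$. The endpoints of the components of $K$, where $\omega_K$ has square-root singularities and the sine kernel is replaced by Airy-scale behavior, require a separate local argument showing that the contribution of shrinking edge neighborhoods to $\E[N_n]$ is $o(n)$. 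Finally, to go from $\E[N_n(K)]$ to $\E[N_n(\R)]$, I would invoke Pritsker's results \cite{Pr} on the distribution of zeros of random orthogonal polynomials, which imply that the expected number of real zeros outside any open neighborhood of $K$ is $o(n)$; combining these estimates yields (2.3).
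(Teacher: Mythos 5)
Your overall strategy --- Kac--Rice via Proposition \ref{prop1.1}, sine-kernel universality to identify the limiting zero density $\omega_K/\sqrt{3}$ in the bulk, and an auxiliary equidistribution argument for the leftover zeros --- is the same as the paper's, and your computation of the limits of $B/(nA)$ and $C/(n^2A)$ from the Taylor expansion of $\sin\pi(u-v)/\pi(u-v)$ is exactly how the constant $1/\sqrt{3}$ arises there. But there is a genuine gap at the step where you dispose of the exceptional set $S$. You claim its contribution is ``bounded by $|S|$ times a uniform polynomial majorant.'' No such majorant is available: the pointwise bound $\sqrt{AC-B^2}/A\leq\sqrt{C/A}$ is only controlled where the Christoffel-function estimates apply, and those require the two-sided bound $C^{-1}<w<C$, which by construction fails precisely on $S$. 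Moreover, even granting a crude Markov-type bound $C/A=O(n^4)$ there, multiplying by $|S|<\eps$ gives $\eps\cdot O(n^2)$, which is not $o(n)$; to make your scheme work you would need the expected zero density to be $O(n)$ uniformly on $S$ with a constant independent of $\eps$, and nothing you cite provides that. The paper avoids this entirely: it covers $S$ by thin complex rectangles $R$ with $\nu_K(\partial R)=0$ and applies Lemma \ref{lem3.3} (Pritsker's a.s. weak convergence of the zero counting measures to $\nu_K$, upgraded to convergence of expectations) to get $\E[N_n(S)]\leq\E[N_n(\overline{R})]=n\,\nu_K(\overline{R}\cap\R)+o(n)$, which is small by absolute continuity of $\nu_K$. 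You already invoke \cite{Pr} for the zeros outside $K$; you need to invoke it for $S$ (and for the edge neighborhoods, which it handles by the same covering argument) as well.

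Two smaller points. First, you assert pointwise a.e. universality, uniform in $(u,v)$ on compacts, under only the hypothesis $C^{-1}<w<C$ a.e.; what the paper actually uses from \cite{Lu2} is universality \emph{in measure} and its $L^1$ consequence for the derivative ratios $\tilde{K}_{n+1}^{(j,k)}(x,x)/\tilde{K}_{n+1}(x,x)^{j+k+1}$ (Corollary 1.4 there), combined with the uniform bound $\sup_n\sup_{[a,b]}\frac1n K_{n+1}(x,x)<\infty$ to pass to the limit under the integral; this is weaker than what you claim but suffices, and is what is actually proved under these hypotheses. Second, your scaling $u/(n\omega_K(x))$ implicitly uses Totik's Christoffel-function asymptotics $\frac1n K_{n+1}(x,x)\mu'(x)\to\nu_K'(x)$, which requires STU regularity of $\mu$; this does hold here (since $w>0$ a.e. on a finite union of intervals), but it should be stated, as it is the reason the local density integrates to $\nu_K([a,b])$ and hence the global count to $1/\sqrt{3}$.
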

A simple example of the orthogonality measure $\mu$ satisfying the above conditions is given by the  density $w$ that is continuous on $K$ except for finitely many points, and has finitely many zeros on $K$. More specifically, one may consider the generalized Jacobi weight of the form $w(x)=v(x)\prod_{j=1}^J |x-x_j|^{\alpha_j},$ where $v(x) > 0,\ x\in K,$ and $\alpha_j>-1,\ j=1,\ldots,J.$

Theorem \ref{thm2.1} is a consequence of more precise and general local results given below. In order to state them, we need the notion of the equilibrium measure $\nu _{K}$ of a compact set $K\subset\C$. This is the unique probability measure supported on $K$ that minimizes the energy
\begin{equation*}
I[\nu] = - \iint \log |z-t|\, d\nu(t) d\nu(z)
\end{equation*}
amongst all probability measures $\nu$ with support on $K$. The logarithmic capacity of $K$ is
\begin{equation*}
\textup{cap}(K) = \exp\left(-I[\nu _{K}]\right) .
\end{equation*}
When we say that a compact set $K$ is regular, this means regularity in the sense of Dirichlet problem (or potential theory). See Ransford \cite{Ra} for further orientation.

We also need the notion of a measure $\mu$ regular in the sense of Stahl, Totik, and Ullman \cite{ST}. If $K=\textup{supp}\,\mu$ and
\begin{equation*}
\lim_{n\rightarrow \infty} \gamma _{n}^{1/n}=\frac{1}{\textup{cap}(K)},
\end{equation*}
where $\gamma_n$ is the leading coefficient of $p_n,$ then we say that $\mu$ is STU-regular. A sufficient condition for this is that $K$ consists of finitely many intervals and $\mu ^{\prime }=w>0$ a.e. in those intervals.

\begin{theorem} \label{thm2.2}
Let $\mu $ be an STU regular measure with compact support $K\subset\R$, which is regular in the sense of potential theory. Let $O$ be an open set in which $\mu$ is absolutely continuous, and such that for some $C>1$
\begin{equation} \label{2.4}
C^{-1}\leq \mu ^{\prime }\leq C\textup{ a.e. in }O.
\end{equation}
Then given any compact subinterval $[a,b]$ of $O$, we have
\begin{equation} \label{2.5}
\lim_{n\rightarrow\infty} \frac{1}{n}\E\left[N_n\left([a,b]\right)\right] = \frac{1}{\sqrt{3}} \nu_{K}([a,b]),
\end{equation}
where $\nu_K$ is the equilibrium measure of $K$.
\end{theorem}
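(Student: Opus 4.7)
The plan is to combine the Kac--Rice integral of Proposition \ref{prop1.1} with universality limits for the reproducing kernel and its differentiated diagonals. Taking $g_j = p_j$ in Proposition \ref{prop1.1} is legitimate: $p_0$ is a nonzero constant, and the derivative $P_n'$ of any polynomial of the form \eqref{2.1} has degree at most $n-1$, so it has at most $n-1$ real zeros independent of the coefficients. Using \eqref{2.2}, Proposition \ref{prop1.1} yields
\begin{equation*}
\E[N_n([a,b])] = \frac{1}{\pi}\int_a^b \sqrt{\frac{C(x)}{A(x)} - \left(\frac{B(x)}{A(x)}\right)^{\!2}}\,dx,
\end{equation*}
so that the remaining task is the asymptotic evaluation of this integral.

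Under the assumed STU regularity of $\mu$, potential-theoretic regularity of $K$, and local two-sided bound $C^{-1}\le\mu'\le C$ on $O$, the universality results of Lubinsky \cite{Lu1,Lu2} and Totik \cite{To1,To2} supply the sinc kernel limit
\begin{equation*}
\lim_n \frac{K_n\bigl(x + u/(n\omega_K(x)),\,x + v/(n\omega_K(x))\bigr)}{K_n(x,x)} = \frac{\sin \pi(u-v)}{\pi(u-v)}
\end{equation*}
at a.e.\ $x \in [a,b]$, locally uniformly in $u, v \in \R$, where $\omega_K = d\nu_K/dx$ denotes the equilibrium density (positive and continuous on $[a,b]$ under our hypotheses). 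Both sides are entire in $u,v$, so locally uniform convergence can be differentiated. Computing $\partial_u\partial_v$ at $u=v=0$, using the expansion $\sin\pi w/(\pi w) = 1 - \pi^2 w^2/6 + O(w^4)$, and unwinding the chain rule $\partial_s = n\omega_K(x)\partial_u$ gives
\begin{equation*}
\frac{C(x)}{A(x)} = \frac{K_n^{(1,1)}(x,x)}{K_n(x,x)} \sim \frac{\pi^2 n^2 \omega_K(x)^2}{3},
\end{equation*}
while the evenness of the sinc kernel forces $B(x)/A(x) = o(n\omega_K(x))$. Hence the rescaled integrand satisfies $(n\pi)^{-1}\sqrt{C/A - (B/A)^2} \to \omega_K(x)/\sqrt{3}$ a.e.\ on $[a,b]$.

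The final step is to pass to the limit under the integral sign. STU regularity combined with $C^{-1}\le \mu'\le C$ forces $n\lambda_n(x) = n/K_n(x,x) \ge c$ on $[a,b]$, hence $K_n(x,x) \le C'n$ uniformly there; a companion bound $K_n^{(1,1)}(x,x)/K_n(x,x) \le C''n^2$ follows from Bernstein--Markov type inequalities for weighted orthogonal polynomials, or equivalently from Totik's estimates for differentiated Christoffel functions. These $n$-independent bounds dominate the rescaled integrand by an $L^1([a,b])$ function, and the dominated convergence theorem produces
\begin{equation*}
\lim_n \frac{\E[N_n([a,b])]}{n} = \frac{1}{\pi}\int_a^b \frac{\pi \omega_K(x)}{\sqrt{3}}\,dx = \frac{\nu_K([a,b])}{\sqrt{3}},
\end{equation*}
which is \eqref{2.5}. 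The chief obstacle is this third step: the sinc kernel limit is known pointwise at a.e.\ $x$, but promoting it to an $n$-independent $L^\infty([a,b])$ bound on the differentiated diagonal $K_n^{(1,1)}(x,x)$ requires the weighted Bernstein--Markov machinery, and extracting those estimates from the STU regularity plus two-sided weight hypothesis is where most of the technical work lies.
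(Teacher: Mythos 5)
Your overall strategy (Kac--Rice formula from Proposition \ref{prop1.1} with $g_j=p_j$, then universality, then passage to the limit under the integral) matches the paper's, but the analytic implementation is genuinely different, and the difference is exactly where your proof is thinnest. The paper's Lemma \ref{lem3.2} never uses pointwise a.e.\ universality: it uses the $L^1$ form \eqref{3.7} from \cite[Corollary 1.4]{Lu2}, i.e.\ $\int_a^b\bigl|\tilde K_{n+1}^{(j,k)}(x,x)/\tilde K_{n+1}(x,x)^{j+k+1}-\pi^{j+k}\tau_{j,k}\bigr|\,dx\to0$, combined with the single uniform bound $\sup_n\sup_{[a,b]}\frac1n K_{n+1}(x,x)<\infty$, which follows from $\mu'\ge C^{-1}$ alone via \cite[Theorem 3.3]{Fr}. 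That converts \eqref{3.8} into \eqref{3.6} with no domination of the differentiated kernel required. Theorem \ref{thm2.2} then follows by inserting Totik's Christoffel-function asymptotics $\frac1n\tilde K_{n+1}(x,x)\to\nu_K'(x)$ a.e.\ (\cite[Theorem 1]{To1}) into \eqref{3.6} and using bounded convergence. Note also that your scaled sinc limit with increment $u/(n\omega_K(x))$ silently merges two separate theorems: Lubinsky's universality is proved with increment $u/\tilde K_{n+1}(x,x)$, and replacing $\tilde K_{n+1}(x,x)$ by $n\omega_K(x)$ \emph{is} Totik's theorem --- this is the only place where STU regularity and the potential-theoretic regularity of $K$ enter, so it should be separated out and cited. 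Moreover, under these hypotheses \cite{Lu2} supplies universality in measure on $[a,b]$, not a.e.\ with local uniformity; the a.e.\ statement is true but comes from a different result of Totik that is not among the tools the paper quotes.

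The more serious issue is your third step. Your dominated convergence rests on the asserted uniform bound $K_n^{(1,1)}(x,x)/K_n(x,x)\le C''n^2$ on $[a,b]$, which you correctly flag as the crux but do not prove. It is provable --- a Nikolskii inequality plus Bernstein's inequality on a slightly larger subinterval of $O$ give $K_n^{(1,1)}(x,x)\le Cn^3$ there, and the lower Christoffel bound $K_n(x,x)\ge cn$ follows from $\mu'\le C$ together with STU regularity --- but none of this is off-the-shelf, and the entire point of the paper's $L^1$ formulation \eqref{3.7} is that no pointwise bound on the differentiated kernel is ever needed: $L^1$ convergence of the ratio times an $L^\infty$ bound on $\frac1n\tilde K_{n+1}(x,x)$ already closes the argument. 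So your proof is completable, but as written its hardest step is an assertion, and the paper's route is engineered precisely to avoid having to prove it.
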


This is a special case of the following result, where $\mu$ does not need
to be STU regular. The asymptotic lower bound requires very little of $\mu$.

\begin{theorem} \label{thm2.3}
Let $\mu$ be a measure on the real line with compact support $K$.\\
(a) Assume that $\mu^{\prime}>0$ a.e. in the interval $[a,b]$. Then
\begin{equation} \label{2.6}
\liminf_{n\rightarrow\infty} \frac{1}{n} \E\left[N_n\left([a,b]\right)\right] \geq \frac{1}{\sqrt{3}}\,\nu_{K}([a,b]).
\end{equation}
(b) Suppose in addition that \eqref{2.4} holds, and that $[a,b]\subset O$. Then
\begin{equation} \label{2.7}
\limsup_{n\rightarrow\infty} \frac{1}{n} \E\left[N_n\left([a,b]\right)\right] \leq \frac{1}{\sqrt{3}} \inf_L \,\nu_{L}([a,b]),
\end{equation}
where the $\inf$ is taken over all regular compact sets $L\subset K$ such that $L\supset[a,b]$, and the restriction $\mu\vert_L$ of $\mu$ to $L$ is STU regular.
\end{theorem}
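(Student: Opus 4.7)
The plan is to apply Proposition \ref{prop1.1} with $g_j = p_j$, which gives
\begin{equation*}
\E[N_n([a,b])] = \frac{1}{\pi}\int_a^b F_n(x)\,dx, \qquad F_n(x) \defeq \frac{\sqrt{A(x)C(x)-B(x)^2}}{A(x)},
\end{equation*}
with $A,B,C$ the diagonal reproducing-kernel quantities from \eqref{2.2}; let $\omega_K$ and $\omega_L$ denote the equilibrium densities, so that $\nu_K([a,b]) = \int_a^b \omega_K\,dx$ and similarly for $\nu_L$. Both parts reduce to a pointwise asymptotic for $F_n(x)/n$ on $(a,b)$ combined with a Fatou-type passage to the integral limit.

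The core pointwise input is Lubinsky universality: at a.e.\ $x$ with $\mu'(x)>0$ satisfying the relevant local hypotheses,
\begin{equation*}
\frac{K_{n+1}\bigl(x+\alpha/\rho_n(x),\,x+\beta/\rho_n(x)\bigr)}{K_{n+1}(x,x)} \longrightarrow \frac{\sin\pi(\alpha-\beta)}{\pi(\alpha-\beta)}
\end{equation*}
locally uniformly in $\alpha,\beta$, with Lubinsky scale $\rho_n(x) = K_{n+1}(x,x)\mu'(x)$. Differentiating once in each variable at the diagonal and using the Taylor expansion $\sin\pi s/(\pi s) = 1 - (\pi s)^2/6 + O(s^4)$ yields
\begin{equation*}
B(x)^2 = o\!\left(K_{n+1}(x,x)^4\mu'(x)^2\right), \qquad C(x) \sim (\pi^2/3)\,K_{n+1}(x,x)^3\mu'(x)^2,
\end{equation*}
whence $F_n(x) \sim (\pi/\sqrt{3})\,K_{n+1}(x,x)\mu'(x)$. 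Making this term-by-term differentiation of the universality limit rigorous---via Markov--Bernstein estimates on polynomial derivatives combined with normal-family compactness, or through Christoffel--Darboux based identities for $K_n^{(k,l)}$---is the principal technical hurdle.

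For part (a), combine the preceding with the a.e.\ lower asymptotic bound $\liminf_n K_{n+1}(x,x)\mu'(x)/n \geq \omega_K(x)$ drawn from the Mate--Nevai--Totik circle of ideas, available under just $\mu'>0$ a.e.\ on $[a,b]$. Then $\liminf_n F_n(x)/n \geq (\pi/\sqrt{3})\omega_K(x)$ a.e. on $[a,b]$, and Fatou's lemma delivers the lower bound $\liminf_n \E[N_n([a,b])]/n \geq \nu_K([a,b])/\sqrt{3}$.

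For part (b), fix any admissible $L$ appearing in the infimum. The monotonicity $\mu \geq \mu|_L$ gives the Christoffel-function inequality $K_{n+1}^{(\mu)}(x,x) \leq K_{n+1}^{(\mu|_L)}(x,x)$, while Totik's theorem applied to the STU regular $\mu|_L$ yields $K_{n+1}^{(\mu|_L)}(x,x)\mu'(x)/n \to \omega_L(x)$ a.e.\ on $L$. Hence $\limsup_n K_{n+1}^{(\mu)}(x,x)\mu'(x)/n \leq \omega_L(x)$ a.e.\ on $[a,b]$. The hypothesis \eqref{2.4} supplies an integrable majorant for $F_n/n$ on $[a,b]$, so reverse Fatou together with the universality asymptotic produces $\limsup_n \E[N_n([a,b])]/n \leq \nu_L([a,b])/\sqrt{3}$; taking the infimum over admissible $L$ completes \eqref{2.7}.
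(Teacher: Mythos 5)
Your overall strategy --- the Kac--Rice formula with $g_j=p_j$, universality of the rescaled reproducing kernel to evaluate $\sqrt{AC-B^2}/A$, Totik's Christoffel-function asymptotics, and monotonicity in the measure for part (b) --- is exactly the paper's. But there is a genuine gap in the key analytic input: you invoke \emph{pointwise a.e.} universality, locally uniform in the scaled variables, at a.e.\ $x$ with $\mu'(x)>0$. Under the hypotheses of Theorem \ref{thm2.3} ($\mu$ is not assumed STU regular, and no local Szeg\H{o}-type condition is imposed beyond $\mu'>0$ a.e., resp.\ \eqref{2.4}), no such pointwise statement is known. What is available in this generality, and what the paper actually uses, is Lubinsky's universality \emph{in measure} (\cite[Theorem 1.1]{Lu2}) for part (a) and its $L^1$ consequence \eqref{3.7} (\cite[Corollary 1.4]{Lu2}) for part (b). Consequently your ``establish the a.e.\ pointwise liminf/limsup of $F_n/n$, then apply (reverse) Fatou'' step does not run as written: the a.e.\ pointwise asymptotics for the ratio $\tilde{K}_{n+1}^{(1,1)}/\tilde{K}_{n+1}^{3}-(\tilde{K}_{n+1}^{(0,1)}/\tilde{K}_{n+1}^{2})^{2}$ are precisely what is missing. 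The paper replaces this, in part (a), by introducing exceptional sets $\mathcal{E}_{n}$ (where the universality ratio is not yet within $\varepsilon$ of $\pi^2/3$) and $\mathcal{F}_{n}$ (where $\frac{1}{n}\tilde{K}_{n+1}(x,x)$ is not yet above $\nu_K'(x)-\varepsilon$), showing both have measure $o(1)$, and integrating the lower bound over the complement; and, in part (b), by Lemma \ref{lem3.2}, which converts the $L^1$ statement \eqref{3.7} together with Freud's uniform bound on $\frac{1}{n}K_{n+1}(x,x)$ directly into the asymptotics of $\frac{1}{n}\E[N_n([a,b])]$. A repaired version of your argument would have to either extract a.e.-convergent subsequences from the in-measure statement or reproduce this exceptional-set bookkeeping.

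A second, smaller gap: you correctly identify the passage from the universality limit to the derivative quantities $B$ and $C$ as ``the principal technical hurdle,'' but you do not resolve it, and the routes you gesture at (Markov--Bernstein inequalities) are not how it is done. The mechanism is that $K_{n+1}\bigl(x+u/\tilde{K}_{n+1}(x,x),\,x+v/\tilde{K}_{n+1}(x,x)\bigr)$ is a polynomial, hence entire, in $(u,v)$, so Cauchy's integral formula for derivatives converts the sup over $|u|,|v|\le r$ of the deviation from the sinc kernel into the bound $\frac{j!k!}{r^{j+k}}\sup(\cdots)$ on the normalized derivative ratios; this is \cite[p.~250, Proof of Corollary 1.4]{Lu2} and is quoted at the start of the paper's proof. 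The remaining ingredients of your proposal are correct and coincide with the paper's: the Taylor expansion of the sinc kernel giving $\tau_{0,1}=0$ and $\tau_{1,1}=1/3$, hence $F_n\sim(\pi/\sqrt{3})K_{n+1}(x,x)\mu'(x)$; Totik's a.e.\ lower bound $\liminf_n \frac{1}{n}\tilde{K}_{n+1}(x,x)\ge\nu_K'(x)$ for part (a); and the monotonicity $K_{n+1}(\mu,x,x)\le K_{n+1}(\mu|_L,x,x)$ combined with \cite[Theorem 1]{To1} applied to $\mu|_L$ for part (b).
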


It is plausible that the right hand sides of \eqref{2.6} and \eqref{2.7} are equal under mild assumptions such as the one of part (a). An interesting open problem is to find rates of convergence in the limit relations \eqref{2.3} and \eqref{2.5}.

\section{Proofs}
\begin{proof}[Proof of Proposition \ref{prop1.1}]
This proof is based on the discussions of Kac \cite[p. 5-10]{Ka1} and Das \cite{Da2}. The joint probability density of $\textbf{c}=(c_0,c_1,\cdots,c_n)$ is
\[
dP(\textbf{c}) = (2\pi)^{-(n+1)/2}\sigma^{-(n+1)}\displaystyle e^{-\frac{\|\textbf{c}\|^2}{2\sigma^2}}\,dc_0dc_1\cdots dc_n,
\]
where $\|\textbf{c}\|^2=c_0^2+c_1^2+\cdots+c_n^2$. Since $G_n(x)$ has at most $M+1$ zeros in $(a,b)$ for all $\textbf{c}$ by Rolle's theorem, $N_n(a,b)$ is integrable over $\mathbb R^{n+1}$ with respect to $dP(\textbf{c})$. Define
\[
N_n^{*}(a,b)=N_n(a,b)-(\kappa(a)+\kappa(b))/2,
\]
where
\begin{align*}
\kappa(x)=
\begin{cases}
\hfill 1 \hfill & \text{ if }G_n(x)=0,\\
\hfill 0 \hfill & \text{ otherwise}.\\
\end{cases}
\end{align*}
Since $G_n(a)$ and $G_n(b)$ are continuous random variables, we have
\[
\E[N_n(a,b)] = \displaystyle \int_{\mathbb R^{n+1}}^{} N_n^{*}(a,b) \, dP(\textbf{c})   . \]
We state the following result from Kac \cite[Theorem 1]{Ka2}.
\begin{lemma} \label{lem3.1}
If $f(x)$ is continuous for $\alpha \leq x \leq \beta$ and continuously differentiable for $\alpha < x < \beta$, and $f'(x)$ vanishes only at a finite number of points in $\alpha < x < \beta$, then the number of zeros of $f(x)$ in $\alpha < x < \beta$ (multiple zeros are counted once and if either $\alpha$ or $\beta$ is a zero, it is counted as $1/2$) is equal to
\[ 
\textup{P.V. } \frac{1}{2\pi} \displaystyle \int_{-\infty}^{\infty}  \displaystyle \int_{\alpha}^{\beta} \cos(y f(x))\abs{f'(x)} \,dx\,dy.
\]
\end{lemma}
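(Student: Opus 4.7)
The plan is to reduce the double integral to a finite sum of elementary Dirichlet integrals by exploiting the monotonicity of $f$ between its critical points. Let $\alpha=t_0<t_1<\cdots<t_m=\beta$ be the partition obtained by inserting the finitely many zeros of $f'$ in $(\alpha,\beta)$ as interior nodes, so that $f$ is continuous and strictly monotone on each subinterval $[t_{i-1},t_i]$. Set $m_i=\min\{f(t_{i-1}),f(t_i)\}$ and $M_i=\max\{f(t_{i-1}),f(t_i)\}$.

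The substitution $u=f(x)$ on the $i$-th piece, using $|du|=|f'(x)|\,dx$, yields
\begin{equation*}
\int_{t_{i-1}}^{t_i}\cos(yf(x))|f'(x)|\,dx \;=\; \int_{m_i}^{M_i}\cos(yu)\,du \;=\; \frac{\sin(yM_i)-\sin(ym_i)}{y},
\end{equation*}
a formula valid also at $y=0$ by continuous extension. By Fubini on the bounded rectangle $[-Y,Y]\times[\alpha,\beta]$ and summation in $i$,
\begin{equation*}
\frac{1}{2\pi}\int_{-Y}^{Y}\!\int_\alpha^\beta\cos(yf(x))|f'(x)|\,dx\,dy \;=\; \sum_{i=1}^m\frac{1}{2\pi}\int_{-Y}^Y\frac{\sin(yM_i)-\sin(ym_i)}{y}\,dy.
\end{equation*}
The classical Dirichlet identity $\lim_{Y\to\infty}\int_{-Y}^Y \sin(yu)/y\,dy = \pi\operatorname{sgn}(u)$ for each fixed $u\in\R$ (with $\operatorname{sgn}(0)\defeq 0$) then forces each term to converge pointwise, and since the sum is finite the limit may be taken termwise, giving
\begin{equation*}
\textup{P.V.}\,\frac{1}{2\pi}\int_{-\infty}^{\infty}\!\int_\alpha^\beta\cos(yf(x))|f'(x)|\,dx\,dy \;=\; \frac{1}{2}\sum_{i=1}^m\bigl(\operatorname{sgn}(M_i)-\operatorname{sgn}(m_i)\bigr).
\end{equation*}

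It remains to identify this sum with the claimed zero count, which is a routine case analysis on the signs of $(f(t_{i-1}),f(t_i))$: the $i$-th piece contributes $1$ when these values have strictly opposite signs (equivalently, $f$ has a unique zero in $(t_{i-1},t_i)$), $1/2$ when exactly one of them vanishes, and $0$ otherwise; the case where both endpoints vanish cannot arise, since monotonicity would force $f\equiv 0$ on the piece, contradicting the hypothesis that $f'$ has only finitely many zeros. A zero of $f$ strictly inside a piece is therefore counted once; a zero coinciding with an internal node $t_i$ is shared by two adjacent pieces and receives $1/2+1/2=1$; a zero at $\alpha$ or $\beta$ belongs to a single piece and receives $1/2$; and multiple zeros, which can only occur at internal nodes, are again counted only once. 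I do not expect a genuine obstacle beyond careful sign bookkeeping in this case analysis: Fubini is justified by boundedness of the integrand on each rectangle, and the Dirichlet identity follows from $\int_0^\infty \sin t/t\,dt = \pi/2$ by a change of variable.
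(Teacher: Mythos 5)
The paper does not actually prove this lemma: it is quoted verbatim from Kac \cite[Theorem 1]{Ka2} and used as a black box, so there is no in-paper argument to compare against. Your proof is correct and is essentially a reconstruction of Kac's original argument: partition at the zeros of $f'$, change variables $u=f(x)$ on each monotone piece, and evaluate the resulting Dirichlet integrals $\textup{P.V.}\int\sin(yu)/y\,dy=\pi\operatorname{sgn}(u)$; the sign bookkeeping at the end is exactly right (in particular the impossibility of $m_i=M_i=0$ follows immediately from strict monotonicity on each piece, so you do not even need the ``$f\equiv 0$'' contradiction). The one imprecision worth fixing is your justification of Fubini: the hypotheses only give $f\in C[\alpha,\beta]\cap C^1(\alpha,\beta)$, so $f'$ need not be bounded near $\alpha$ or $\beta$ and the integrand need not be bounded on $[-Y,Y]\times[\alpha,\beta]$. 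What saves the argument is integrability rather than boundedness: on each monotone piece $\int_{t_{i-1}}^{t_i}|f'|=M_i-m_i<\infty$ (the total variation of a continuous monotone function, recovered from the fundamental theorem of calculus on compact subintervals plus monotone convergence), so the double integral over $[-Y,Y]\times[\alpha,\beta]$ is absolutely convergent and Tonelli--Fubini applies. With that sentence repaired, the proof is complete and self-contained, which is arguably more than the paper itself provides for this step.
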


In our notation, this gives
\[
N_n^{*}(a,b) = \textup{P.V. } \frac{1}{2\pi} \int_{-\infty}^{\infty} \displaystyle \int_{a}^{b} \cos(y G_n(x)) \abs{G_n'(x)} \,dx\,dy.
\]
Thus
\begin{align}
\nonumber
\E[N_n(a,b)] &= (2\pi)^{-\frac{n+1}{2}}\sigma^{-(n+1)} \displaystyle \int_{-\infty}^{\infty} \cdots \displaystyle \int_{-\infty}^{\infty} N_n^{*}(a,b)\displaystyle  e^{-\frac{\|\textbf{c}\|^2}{2\sigma^2}}\, dc_0dc_1\cdots dc_n\\
\label{3.1}
&=\frac{\sigma^{-(n+1)}}{2\pi} \int_{a}^{b} \int_{-\infty}^{\infty} R_n(x, y) \, dy \, dx,
\end{align}
where
\begin{equation}
\label{3.2}
R_n(x, y)=(2\pi)^{-\frac{n+1}{2}} \int_{-\infty}^{\infty} \cdots \int_{-\infty}^{\infty} e^{-\frac{\|\textbf{c}\|^2}{2\sigma^2}}\cos(y G_n(x))\abs{G_n'(x)}\, dc_0dc_1\cdots dc_n.
\end{equation}
The interchange of the integration order is justified by the fact that the integrand is dominated by
\[
 e^{-\frac{\|\textbf{c}\|^2}{2\sigma^2}} \sum_{j=0}^{n}\abs{c_j}\abs{g_j'(x)},
\]
which is exponentially small outside bounded sets in $\mathbb R^{n+1}$. We use the known relation
\begin{align} \label{3.3}
\frac{1}{\pi} \int_{-\infty}^{\infty} \frac{1-\cos(u v)}{u^2} \, du = \abs{v}
\end{align}
to write (\ref{3.2}) as
\begin{align} \label{3.4}
\nonumber
R_n(x, y) =&\ \textup{P.V. } \frac{1}{\pi} \int_{-\infty}^{\infty}\frac{du}{u^2}\ \times \\ (2\pi)^{-\frac{n+1}{2}} &\int_{-\infty}^{\infty} \cdots \int_{-\infty}^{\infty} e^{-\frac{\|\textbf{c}\|^2}{2\sigma^2}} \left(\cos(y G_n(x))-\cos(y G_n(x))\cos(u G_n'(x))\right)\,dc_0dc_1\cdots dc_n,
\end{align}
where the interchange of orders of the integration can be justified as above, and (\ref{3.4}) is interpreted as
\begin{equation} \label{3.5}
\lim_{N \rightarrow \infty} \lim_{\epsilon \rightarrow 0} \frac{1}{\pi} \left(\int_{-N}^{-\epsilon}+\int_{\epsilon}^{N}\right) (\cdots) \frac{du}{u^2}.
\end{equation}
Noting that
\begin{align*}
\cos(y G_n(x))\cos(u G_n'(x))=\frac{1}{2}\mathfrak{R}\left(e^{iy G_n(x)+iu G_n'(x)} + e^{iy G_n(x) - iu G_n'(x)}\right),
\end{align*}
we obtain with help of \cite[3.323(2) on p. 337]{GR} that
\begin{align*}
&(2\pi)^{-\frac{n+1}{2}}\displaystyle \int_{-\infty}^{\infty} \cdots \displaystyle \int_{-\infty}^{\infty} \displaystyle  e^{-\frac{\|\textbf{c}\|^2}{2\sigma^2}}\cos(y G_n(x))\cos(u G_n'(x))\,dc_0\cdots dc_n\\
&=\frac{(2\pi)^{-\frac{n+1}{2}}}{2}\mathfrak{R}\displaystyle \int_{-\infty}^{\infty} \cdots \displaystyle \int_{-\infty}^{\infty} \displaystyle  e^{-\frac{\|\textbf{c}\|^2}{2\sigma^2}} \left( \displaystyle e^{i\sum\limits_{j=0}^{n}[y c_jg_j(x)+u c_jg_j'(x)]}+ \displaystyle e^{i\sum\limits_{j=0}^{n}[y c_jg_j(x)-u c_jg_j'(x)]}  \right)\, dc_0\cdots dc_n\\
&=\frac{(2\pi)^{-\frac{n+1}{2}}}{2}\mathfrak{R}\left(  \prod\limits_{j=0}^{n} \displaystyle \int_{-\infty}^{\infty} \displaystyle e^{-\frac{c_j^2}{2\sigma^2}+i[y g_j(x)+u g_j'(x)]c_j} \, dc_j +\prod\limits_{j=0}^{n} \displaystyle \int_{-\infty}^{\infty} \displaystyle e^{-\frac{c_j^2}{2\sigma^2}+i[y g_j(x)-u g_j'(x)]c_j}\, dc_j \right)\\
&=\frac{(2\pi)^{-\frac{n+1}{2}}}{2}\mathfrak{R}\left( \prod\limits_{j=0}^{n}(2\pi)^{\frac{1}{2}}\sigma \displaystyle e^{-\frac{1}{2}[y g_j(x)+u g_j'(x)]^2\sigma^2}+\prod\limits_{j=0}^{n}(2\pi)^{\frac{1}{2}}\sigma \displaystyle e^{-\frac{1}{2}[y g_j(x)-u g_j'(x)]^2\sigma^2} \right)\\
&=\frac{\sigma^{n+1}}{2} \displaystyle e^{-\frac{\sigma^2}{2}\sum\limits_{j=0}^{n}[y g_j(x)+u g_j'(x)]^2}+\frac{\sigma^{n+1}}{2}\displaystyle e^{-\frac{\sigma^2}{2}\sum\limits_{j=0}^{n}[y g_j(x)-u g_j'(x)]^2}.
\end{align*}
For $u=0$, we have
\[
(2\pi)^{-\frac{n+1}{2}}\displaystyle \int_{-\infty}^{\infty} \cdots \displaystyle \int_{-\infty}^{\infty} \displaystyle e^{-\frac{\|\textbf{c}\|^2}{2\sigma^2}}\cos(y G_n(x)) \, dc_0\cdots dc_n = \sigma^{n+1} \displaystyle e^{-\frac{\sigma^2}{2}\sum\limits_{j=0}^{n}[y g_j(x)]^2}.
\]
Using abbreviations $A=A(x),\ B=B(x)$ and $C=C(x)$, we rewrite
\begin{align*}
R_n(x, y) &= \frac{\sigma^{n+1}}{\pi} \int_{-\infty}^{\infty} \frac{\displaystyle e^{-\frac{\sigma^2}{2}Ay^2}}{u^2}\,du \\ & - \frac{\sigma^{n+1}}{2\pi} \left( \int_{-\infty}^{\infty} \frac{e^{-\frac{\sigma^2}{2}(Ay^2+Cu^2+2y u B)}}{u^2}\,du + \int_{-\infty}^{\infty} \frac{e^{-\frac{\sigma^2}{2}(Ay^2+Cu^2-2y u B)}}{u^2}\,du \right)\\
&=\frac{\sigma^{n+1}}{\pi} e^{-\frac{\sigma^2}{2}Ay^2}  \int_{-\infty}^{\infty}\frac{1-e^{-\frac{\sigma^2}{2}Cu^2+y u B\sigma^2}} {u^2}\, du,
\end{align*}
where the integral exists as a principal value, in the sense indicated in (\ref{3.5}). If $C(x)=0$ for some $x$ then $B(x)=0$ and $R(x,y)=0$ for the same $x$ and all $y$. Thus we set $By\sigma^2=t$ and $ \sigma^2C=h>0$, so that
\[
R_n(x, y)=\frac{\sigma^{n+1}}{\pi} \displaystyle e^{-\frac{\sigma^2}{2}Ay^2} \displaystyle \int_{-\infty}^{\infty}\frac{1- \displaystyle e^{-\frac{1}{2}hu^2+tu}} {u^2}\, du.
\]
The  Taylor expansion
\[
e^{tu}=1+tu+\sum\limits_{m=2}^{\infty}\frac{t^mu^m} {m!},
\]
together with known identities
\[
\displaystyle \int_{-\infty}^{\infty}\frac{1- \displaystyle e^{-\frac{1}{2}hu^2}} {u^2}\,du = \sqrt{2\pi h}\quad \text{and}\quad
\textup{P.V.} \int_{-\infty}^{\infty}\frac{tu}{u^2}\, e^{-\frac{1}{2}hu^2}\,du=0,
\]
gives that
\[
R_n(x, y)=\frac{\sigma^{n+1}}{\pi} \displaystyle e^{-\frac{\sigma^2}{2}Ay^2}\left(\sqrt{2\pi h}-\displaystyle \int_{-\infty}^{\infty}\left(\sum\limits_{m=2}^{\infty}\frac{t^m u^m} {m!}\right)\displaystyle \frac{e^{-\frac{1}{2}hu^2}}{u^2}\,du\right).
\]
Assuming that $h>0$, we further obtain that
\begin{align*}
&\displaystyle \int_{-\infty}^{\infty}\left(\sum\limits_{m=2}^{\infty}\frac{t^m u^m} {m!}\right)\displaystyle \frac{e^{-\frac{1}{2}h u^2}}{u^2}\, du\\
&= \sum\limits_{m=1}^{\infty}\frac{t^{2m}}{(2m)!}\displaystyle \int_{-\infty}^{\infty}u^{2(m-1)}e^{-\frac{1}{2}h u^2}\,du\\
&= \sum\limits_{m=1}^{\infty}\frac{t^{2m}} {(2m)!}\displaystyle \frac{(2(m-1))!}{2^{m-1}(m-1)!}\sqrt{2\pi}h^{-(m-1)-\frac{1}{2}}\quad (\text{by \cite[3.461(2) on p. 364]{GR}})\\
&= \sum\limits_{m=1}^{\infty}\frac{\sqrt{2\pi h}}{m!(2m-1)}\left(\frac{t^2}{2h}\right)^m.
\end{align*}
Hence
\begin{align*}
R_n(x, y)&=\frac{\sigma^{n+1}}{\pi} \displaystyle e^{-\frac{\sigma^2}{2}Ay^2}\left(\sqrt{2\pi h}-\displaystyle \sum\limits_{m=1}^{\infty}\frac{\sqrt{2\pi h}}{m!(2m-1)}\left(\frac{t^2}{2h}\right)^m\right)\\
&=\sqrt{\frac{2C}{\pi}}\ \sigma^{n+2}\displaystyle e^{-\frac{\sigma^2}{2}Ay^2}\left(1-\displaystyle \sum\limits_{m=1}^{\infty}\frac{1}{m!(2m-1)}\left(\frac{B^2\sigma^2}{2C}\right)^my^{2m}\right).
\end{align*}
Applying \cite[3.461(2) on p. 364]{GR} again, we obtain that
\begin{align*}
\displaystyle \int_{-\infty}^{\infty}  R_n(x, y)\, dy&=\sqrt{\frac{2C}{\pi}}\ \sigma^{n+2}\displaystyle \int_{-\infty}^{\infty}\displaystyle e^{-\frac{\sigma^2}{2}Ay^2}\left(1-\displaystyle \sum\limits_{m=1}^{\infty}\frac{1}{m!(2m-1)}\left(\frac{B^2\sigma^2}{2C}\right)^my^{2m}\right) \, dy\\
&=\sqrt{\frac{2C}{\pi}}\ \sigma^{n+2} \left( \sqrt{\frac{2\pi}{A\sigma^2}}-\displaystyle \sum\limits_{m=1}^{\infty}\frac{(\frac{B^2\sigma^2}{2C})^m}{m!(2m-1)}\frac{(2m)!}{m!2^m}\sqrt{\frac{2\pi}{A\sigma^2}}\frac{1}{(A\sigma^2)^m} \right)\\
&=2\sqrt{\frac{C}{A}}\ \sigma^{n+1}\displaystyle\left( - \sum\limits_{m=0}^{\infty}\left(\frac{B^2}{AC}\right)^m\frac{(2m)!}{(m!)^2(2m-1)4^m}\right)\\
&=2\sqrt{\frac{C}{A}}\sqrt{1-\frac{B^2}{AC}}\ \sigma^{n+1}.
\end{align*}
Then (\ref{3.1}) gives us the desired formula
\[
\E[N_n(a,b)]=\frac{1}{\pi} \displaystyle \int_{a}^{b}\frac{\sqrt{AC-B^2}}{A}\, dx,
\]
where $AC-B^2\ge 0$ by \eqref{1.3} and the Cauchy-Schwarz inequality.
\end{proof}

In addition to the reproducing kernels in \eqref{2.2}, we also use their weighted versions in the proofs below:
\begin{equation*}
\tilde{K}_{n}^{\left( k,\ell \right) }\left( x,y\right) =\mu ^{\prime}\left( x\right) ^{1/2}\mu ^{\prime }\left( y\right)^{1/2}\sum_{j=0}^{n-1}p_{j}^{(k)}\left( x\right) p_{j}^{\left( \ell \right)}\left( y\right).
\end{equation*}

\begin{lemma} \label{lem3.2}
Let $\mu$ be a measure with compact support and with infinitely many points in its support. Let $O$ be an open set in which $\mu$ is absolutely continuous, and such that for some $C>1$ \eqref{2.4} holds. Then given any compact subinterval $[a,b]$ of $O$, we have
\begin{equation} \label{3.6}
\frac{1}{n} \E\left[ N_{n}\left([a,b]\right) \right] =\frac{1+o(1)}{\sqrt{3}}%
\int_{a}^{b}\frac{1}{n}K_{n+1}(x,x)\,d\mu(x).
\end{equation}
\end{lemma}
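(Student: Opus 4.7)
The plan is to apply Proposition~\ref{prop1.1} with $g_j = p_j$, which gives
\[
\E[N_n([a,b])] = \frac{1}{\pi}\int_a^b \frac{\sqrt{A(x)C(x)-B(x)^2}}{A(x)}\,dx
\]
with $A, B, C$ as in \eqref{2.2}. Since $d\mu(x) = \mu'(x)\,dx$ on $[a,b] \subset O$ and $\tilde K_{n+1}(x,x) = \mu'(x) K_{n+1}(x,x)$, the desired identity \eqref{3.6} is equivalent to
\[
\int_a^b \frac{\sqrt{AC-B^2}}{A}\,dx = \frac{\pi(1+o(1))}{\sqrt{3}}\int_a^b \tilde K_{n+1}(x,x)\,dx,
\]
so the real content is an asymptotic identification of the integrand on the left with $(\pi/\sqrt 3)\,\tilde K_{n+1}(x,x)$.

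For this I would invoke the reproducing-kernel universality of Lubinsky and Totik: for a.e.\ $x \in O$,
\[
\frac{K_{n+1}\bigl(x + u/\tilde K_{n+1}(x,x),\,x + v/\tilde K_{n+1}(x,x)\bigr)}{K_{n+1}(x,x)}\longrightarrow\frac{\sin\pi(u-v)}{\pi(u-v)}
\]
locally uniformly (in versions of the theorem proved for $(u,v)$ in compact subsets of $\C^2$). Cauchy's integral formula then upgrades the convergence to all derivatives, and evaluating mixed partials at $u=v=0$, using the expansion $\sin(\pi s)/(\pi s) = 1 - \pi^2 s^2/6 + O(s^4)$, gives
\[
\frac{B(x)}{A(x)\,\tilde K_{n+1}(x,x)} \to 0,\qquad \frac{C(x)}{A(x)\,\tilde K_{n+1}(x,x)^2} \to \frac{\pi^2}{3}
\]
for a.e.\ $x \in [a,b]$, whence
\[
\frac{\sqrt{A(x)C(x)-B(x)^2}}{A(x)\,\tilde K_{n+1}(x,x)} \longrightarrow \frac{\pi}{\sqrt 3}\quad\text{a.e.\ on }[a,b].
\]

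To promote this pointwise statement to the integrated identity, I would combine (i) a uniform upper bound $\sqrt{AC-B^2}/(A\,\tilde K_{n+1}) \le M$ on $[a,b]$, coming from standard local diagonal estimates $K_n^{(k,\ell)}(x,x) = O(n^{1+k+\ell})$ together with the lower bound $K_n(x,x) \ge c\,n$ available on compact subsets of $O$ under \eqref{2.4}, with (ii) Egorov's theorem on the finite-measure set $[a,b]$: on a good set $F_\delta$ with $|[a,b]\setminus F_\delta| < \delta$ the convergence is uniform, so this piece contributes $\tfrac{\pi}{\sqrt 3}(1+o(1))\int_{F_\delta}\tilde K_{n+1}\,dx$, while the bad part is at most $M\int_{[a,b]\setminus F_\delta}\tilde K_{n+1}\,dx = O(\delta n)$. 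Combined with the matching lower bound $\int_a^b \tilde K_{n+1}(x,x)\,dx \asymp n$, first letting $n \to \infty$ and then $\delta \to 0$ delivers \eqref{3.6}.

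The main obstacle I anticipate is the differentiated-on-diagonal form of universality together with the companion uniform upper bounds on $K_{n+1}^{(k,\ell)}(x,x)$: these go slightly beyond the classical undifferentiated universality statement, and must be either imported from or derived as small extensions of the Lubinsky--Totik theory already cited in the paper. Once those ingredients are in hand, the remainder is routine measure-theoretic bookkeeping.
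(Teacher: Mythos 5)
Your overall architecture is the same as the paper's: apply Proposition~\ref{prop1.1} with $g_j=p_j$, rewrite the Kac integrand in terms of the weighted kernels $\tilde{K}_{n+1}^{(j,k)}(x,x)/\tilde{K}_{n+1}(x,x)^{j+k+1}$, identify these ratios asymptotically with $\pi^{j+k}\tau_{j,k}$ (your computation giving $\pi^2/3$ and $0$ for the $(1,1)$ and $(0,1)$ ratios is correct), and integrate against $\frac1n\tilde{K}_{n+1}(x,x)\,dx$ using the uniform bound $\frac1n K_{n+1}(x,x)\le C_1$ on $[a,b]$ that follows from $\mu'\ge C^{-1}$.

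The genuine gap is in your key analytic input. You invoke universality \emph{for a.e.\ $x\in O$} with locally uniform convergence in $(u,v)$, and then run Cauchy's formula plus Egorov. Under the hypotheses of this lemma --- $\mu$ compactly supported with infinitely many points in its support and $C^{-1}\le\mu'\le C$ a.e.\ on $O$, with \emph{no} regularity assumption on $\mu$ (neither STU regularity nor potential-theoretic regularity of the support) --- almost-everywhere universality is not available from the results cited in the paper. What \cite{Lu2} proves under exactly these weak hypotheses is universality \emph{in measure} (this is the point of its title), together with the $L^1$ statement \eqref{3.7} of its Corollary 1.4: the diagonal derivative ratios converge to $\pi^{j+k}\tau_{j,k}$ in $L^1[a,b]$. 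Pointwise a.e.\ versions in the literature require extra hypotheses (e.g.\ STU regularity and a local Szeg\H{o} condition), which Lemma~\ref{lem3.2} deliberately avoids so that it can feed into Theorem~\ref{thm2.3}. Consequently the Egorov step, which needs a.e.\ convergence as input, does not get off the ground as written. The repair is exactly what the paper does: import \cite[Corollary 1.4]{Lu2} directly in its $L^1$ form \eqref{3.7}; combined with the uniform bound on $\frac1n\tilde{K}_{n+1}(x,x)$ and the elementary continuity of $(s,t)\mapsto\sqrt{s-t^2}$ near $(\pi^2/3,0)$, this replaces both your Egorov argument and your companion upper bounds $K_n^{(k,\ell)}(x,x)=O(n^{1+k+\ell})$, $K_n(x,x)\ge cn$, which you would otherwise have to establish separately. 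You correctly flagged the differentiated-on-diagonal universality as the main obstacle; the resolution is that it must be quoted in its integrated ($L^1$ or in-measure) form rather than derived from a pointwise a.e.\ statement that is not known here.
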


\begin{proof}
First note that the hypothesis that $\mu ^{\prime }\geq C^{-1}$ in $O$ gives \cite[Theorem 3.3, p. 104]{Fr}
\begin{equation*}
C_{1}=\sup_{n\geq 1}\sup_{x\in [a,b]}\frac{1}{n}K_{n+1}\left( x,x\right) <\infty.
\end{equation*}
Next, we use Corollary 1.4 in \cite[p. 224]{Lu2}. It gives for all $j,k\geq 0,$
\begin{equation} \label{3.7}
\lim_{n\rightarrow \infty }\int_a^b\left|\frac{\tilde{K}_{n+1}^{(j,k)}(x,x)}{\tilde{K}_{n+1}(x,x)^{j+k+1}} - \pi^{j+k}\tau _{j,k}\right|\,dx = 0.
\end{equation}
Here
\begin{equation*}
\tau _{j,k}=\left\{
\begin{tabular}{ll}
$0,$ & $j+k$ odd \\
$(-1)^{(j-k)/2}\frac{1}{j+k+1},$ & $j+k$ even.
\end{tabular}
\right.
\end{equation*}
Applying \eqref{1.2} in a modified form, we obtain that
\begin{equation} \label{3.8}
\frac{1}{n} \E\left[N_{n}\left([a,b]\right)\right] = \frac{1}{\pi }\int_{a}^{b} \sqrt{\frac{\tilde{K}_{n+1}^{\left( 1,1\right) }\left( x,x\right) }{\tilde{K}_{n+1}\left( x,x\right) ^{3}}-\left( \frac{\tilde{K}_{n+1}^{\left(0,1\right) }\left(x,x\right)}{\tilde{K}_{n+1}\left( x,x\right) ^{2}}\right) ^{2}}\frac{1}{n}\tilde{K}_{n+1}\left( x,x\right) dx.
\end{equation}
Since $\frac{1}{n}\tilde{K}_{n+1}\left( x,x\right)$ is bounded uniformly in $n$ and in $x\in[a,b]$, we can use \eqref{3.7} above to obtain
\begin{eqnarray*}
\frac{1}{n}E\left[ N_{n}([a,b])\right]  &=&\frac{1}{\pi } \int_a^b \left(\sqrt{\pi ^{2}\tau _{1,1}-\left( \pi \tau _{0,1}\right) ^{2}} + o\left( 1\right) \right) \frac{1}{n} \tilde{K}_{n+1}(x,x)\,dx \\
&=&\frac{1+o(1)}{\sqrt{3}} \int_a^b \frac{1}{n} \tilde{K}_{n+1}(x,x)\,dx.
\end{eqnarray*}
\end{proof}

\begin{proof}[Proof of Theorem \ref{thm2.2}]
Note that since $\mu ^{\prime }>0$ a.e. in $[a,b] $, this interval is contained in supp$\,\nu _{K}$. In \cite[p. 287, Theorem 1]{To1}, under weaker conditions, Totik proved that for a.e. $x\in [a,b],$
\begin{equation*}
\lim_{n\rightarrow \infty} \frac{1}{n} K_{n+1}(x,x) = \frac{d\nu_{K}}{d\mu}(x).
\end{equation*}
Since
\begin{equation*}
\lim_{n\rightarrow \infty} \frac{1}{n} \tilde{K}_{n+1}(x,x) = \frac{d\nu_{K}}{d\mu}(x)\,\mu'(x) = \nu_{K}^{\prime}(x),
\end{equation*}
the uniform boundedness of $\left\{ \frac{1}{n}\tilde{K}_{n+1}(x,x)\right\}_{n=1}^{\infty}$ and Lemma \ref{lem3.2} then give the result.
\end{proof}

\begin{proof}[Proof of Theorem \ref{thm2.3}]
We start with part (a). Given $r>0$, and $j,k\geq 0,$ with $\tau _{j,k}$ as above, it follows from \cite[p. 250, Proof of Corollary 1.4]{Lu2} that
\begin{eqnarray*}
&&\left\vert \frac{\tilde{K}_{n+1}^{\left( j,k\right) }\left( x,x\right) }{%
\tilde{K}_{n+1}\left( x,x\right) ^{j+k+1}}-\pi ^{j+k}\tau _{j,k}\right\vert
\\
&\leq &\frac{j!k!}{r^{j+k}}\sup_{\left\vert u\right\vert ,\left\vert
v\right\vert \leq r}\left\vert \frac{K_{n+1}\left( x+\frac{u}{\tilde{K}
_{n+1}\left( x,x\right) },x+\frac{v}{\tilde{K}_{n+1}\left( x,x\right) }
\right) }{K_{n+1}\left( x,x\right) }-\frac{\sin \left( \pi \left( u-v\right)
\right) }{\pi \left( u-v\right) }\right\vert .
\end{eqnarray*}%
Next, using that $\mu ^{\prime }>0$ a.e. in $[a,b]$, we have from \cite[p. 223, Theorem 1.1]{Lu2} that
\begin{equation*}
\text{meas}\left\{ x\in \left[ a,b\right] :\sup_{|u|,|v| \leq r} \left\vert \frac{K_{n+1}\left( x+\frac{u}{\tilde{K}_{n+1}(x,x)},x+\frac{v}{\tilde{K}_{n+1}(x,x)}\right)}{K_{n+1}\left( x,x\right)} - \frac{\sin(\pi(u-v))}{\pi(u-v)}\right\vert \geq \varepsilon
\right\} \rightarrow 0
\end{equation*}
as $n\rightarrow \infty$, for any given $\varepsilon,r>0$. Thus also
\begin{equation*}
\text{meas}\left\{ x\in \left[ a,b\right] :\left\vert \frac{\tilde{K}%
_{n+1}^{\left( j,k\right) }\left( x,x\right) }{\tilde{K}_{n+1}\left(
x,x\right) ^{j+k+1}}-\pi ^{j+k}\tau _{j,k}\right\vert \geq \varepsilon
\right\} \rightarrow 0
\end{equation*}
as $n\rightarrow \infty $. Now let $\varepsilon >0$, and for $n\geq 1$, let
\begin{equation*}
\mathcal{E}_{n}=\left\{ x\in \left[ a,b\right] :\sqrt{\frac{\tilde{K}%
_{n+1}^{\left( 1,1\right) }\left( x,x\right) }{\tilde{K}_{n+1}\left(
x,x\right) ^{3}}-\left( \frac{\tilde{K}_{n+1}^{\left( 0,1\right) }\left(
x,x\right) }{\tilde{K}_{n+1}\left( x,x\right) ^{2}}\right) ^{2}}\leq \sqrt{%
\pi ^{2}/3}-\varepsilon \right\} .
\end{equation*}
Then it follows that
\begin{equation*}
\text{meas}\left( \mathcal{E}_{n}\right) \rightarrow 0\text{ as }%
n\rightarrow \infty \text{.}
\end{equation*}%
Using \cite[p. 118, Thm. 2.1]{To2}, we have for a.e. $x\in[a,b]$ that
\begin{equation*}
\liminf_{n\rightarrow \infty }\frac{1}{n}\tilde{K}_{n+1}\left( x,x\right)
\geq \nu _{K}^{\prime }\left( x\right).
\end{equation*}
It then follows, that given $\varepsilon >0$,
\begin{equation*}
\mathcal{F}_{n}=\left\{ x\in \left[ a,b\right] :\frac{1}{n}\tilde{K}%
_{n+1}\left( x,x\right) \leq \nu _{K}^{\prime }(x) -\varepsilon \right\}
\end{equation*}
has
\begin{equation} \label{3.9}
\text{meas}\left( \mathcal{F}_{n}\right) \rightarrow 0\text{ as }%
n\rightarrow \infty.
\end{equation}%
Indeed, if we set
\begin{equation*}
f_{n}(x) = \min \left\{ \frac{1}{n}\tilde{K}_{n+1}(x,x) - \nu _{K}^{\prime }(x),0 \right\},
\end{equation*}
then by Totik's result,
\begin{equation*}
\lim_{n\rightarrow \infty} f_{n}(x) = 0 \text{ a.e. in }[a,b],
\end{equation*}%
while $f_{n}$ is bounded below by $-\nu _{K}^{\prime }$, so Lebesgue's Dominated Convergence Theorem gives
\begin{equation*}
0 = \lim_{n\rightarrow\infty} \int_{a}^{b} f_{n} \leq \liminf_{n\rightarrow\infty} (-\varepsilon) \text{meas}\left(\mathcal{F}_{n}\right).
\end{equation*}
Thus \eqref{3.9} holds. Then by \eqref{1.2}, \eqref{3.8} and the definitions of $\mathcal{E}_{n}$ and $\mathcal{F}_{n},$ we have
\begin{align*}
\frac{1}{n}\E\left[N_{n}\left([a,b]\right)\right] &= \frac{1}{\pi }\int_{a}^{b} \sqrt{\frac{\tilde{K}_{n+1}^{\left( 1,1\right) }\left( x,x\right) }{\tilde{K}_{n+1}\left( x,x\right) ^{3}}-\left( \frac{\tilde{K}_{n+1}^{\left(0,1\right) }\left(x,x\right)}{\tilde{K}_{n+1}\left( x,x\right) ^{2}}\right) ^{2}}\frac{1}{n}\tilde{K}_{n+1}\left( x,x\right) dx \\ &\geq \frac{1}{\pi}\int_{[a,b] \setminus \left(\mathcal{E}_{n}\cup \mathcal{F}_{n}\right)} \left(\sqrt{\pi^{2}/3}-\varepsilon\right) \left( \nu _{K}^{\prime }\left( x\right) -\varepsilon \right) dx \\ &\rightarrow \frac{1}{\pi }\int_a^b \left(\sqrt{\pi^{2}/3}-\varepsilon\right) \left( \nu _{K}^{\prime }\left( x\right) -\varepsilon \right) dx \quad\text{as } n\rightarrow\infty.
\end{align*}
Now we can let $\varepsilon \rightarrow 0$.

We pass to the proof of part (b). Let $L\subset K$ be a regular compact set such that the restriction $\mu\vert_L$ of $\mu$ to $L$ is STU regular, and $L$ contains $[a,b]$ in its interior. By monotonicity of the reproducing kernel (Christoffel function), if $K_{n}\left(\mu\vert_L,\cdot,\cdot\right)$ denotes the reproducing kernel of the measure $\mu\vert_L$, then for a.e. $x\in[a,b]\subset L$, Totik's result \cite[p. 287, Theorem 1]{To1} gives
\begin{eqnarray*}
&&\limsup_{n\rightarrow\infty} \frac{1}{n}K_{n+1}(x,x) \mu^{\prime}(x)  \\
&\leq &\limsup_{n\rightarrow\infty} \frac{1}{n}K_{n+1}\left(\mu\vert_L,x,x\right) \mu^{\prime}(x) = \nu_{L}^{\prime}(x).
\end{eqnarray*}
Moreover, $\left\{ \frac{1}{n}K_{n+1}\left(\mu\vert_L,x,x\right) \mu^{\prime }(x)\right\}_{n=1}^{\infty}$ is uniformly bounded in $[a,b]$. Then Lemma \ref{lem3.2} implies that
\begin{equation*}
\limsup_{n\rightarrow \infty} \frac{1}{n}\E\left[N_{n}\left([a,b]\right)\right] \leq \frac{1}{\sqrt{3}} \int_{a}^{b} \nu _{L}^{\prime}(x)\,dx.
\end{equation*}
Finally, taking the inf over all $L$ gives the result.
\end{proof}

\begin{lemma} \label{lem3.3}
Let $\mu$ be an STU regular measure on the real line with compact support $K$, and let $\nu_K$ be the equilibrium measure of $K$. Suppose that the coefficients of random orthogonal polynomials \eqref{2.1} are complex i.i.d. random variables such that $\E[|\log|c_0||]<\infty$. If $E\subset\C$ is any compact set satisfying $\nu_K(\partial E)=0,$ then
\begin{equation} \label{3.10}
\lim_{n\rightarrow\infty} \frac{1}{n}\E\left[N_n(E)\right] = \nu_{K}(E).
\end{equation}
\end{lemma}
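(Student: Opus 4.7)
My approach is to establish almost sure weak convergence of the normalized zero counting measure $\tau_n := \frac{1}{n}\sum_{k} \delta_{z_{n,k}}$, where $\{z_{n,k}\}$ are the zeros of $P_n$ counted with multiplicity, to the equilibrium measure $\nu_K$, and then deduce \eqref{3.10}. Note that $\E[|\log|c_0||]<\infty$ forces $c_n \neq 0$ a.s., so $\deg P_n = n$ almost surely and $\tau_n$ is a probability measure satisfying $\tau_n(E) = N_n(E)/n$. If $\tau_n \to \nu_K$ weakly a.s. and $\nu_K(\partial E) = 0$, then the Portmanteau theorem gives $\tau_n(E) \to \nu_K(E)$ a.s., and bounded convergence ($0 \leq \tau_n(E) \leq 1$) yields $\frac{1}{n}\E[N_n(E)] = \E[\tau_n(E)] \to \nu_K(E)$.

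To establish the weak convergence, I would factor $P_n(z) = c_n \gamma_n \prod_k (z - z_{n,k})$ and take logarithms to obtain
\[
\frac{1}{n}\log|P_n(z)| = \frac{1}{n}\log|c_n| + \frac{1}{n}\log\gamma_n + \int \log|z-t|\,d\tau_n(t).
\]
A standard Borel--Cantelli argument from $\E[|\log|c_0||] < \infty$ yields $\frac{1}{n}\log|c_n| \to 0$ a.s., while STU regularity gives $\frac{1}{n}\log\gamma_n \to -\log\textup{cap}(K)$. Thus understanding the a.s.\ limit of $\frac{1}{n}\log|P_n(z)|$ is equivalent to understanding the limit of the logarithmic potential of $\tau_n$.

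The crux is to show that $\frac{1}{n}\log|P_n(z)| \to g_K(z,\infty)$ a.s.\ off a polar set in $\C \setminus K$, where $g_K(\cdot,\infty)$ is the Green function of the unbounded component of $\C\setminus K$ with pole at infinity. The upper bound follows from a Bernstein--Walsh style estimate $|p_j(z)|^{1/j} \leq e^{g_K(z,\infty) + o(1)}$ (supplied by STU regularity, uniformly on compact subsets of $\C\setminus K$) together with $|c_j|^{1/j} \to 1$ a.s. The lower bound is the main obstacle: one must verify that the leading term $c_n p_n(z)$ dominates and is not destroyed by accidental cancellation with the lower-order terms at ``most'' points $z$. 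The Stahl--Totik theorem gives $|p_n(z)|^{1/n} \to e^{g_K(z,\infty)}$ for quasi-every $z\in\C\setminus K$, and the finite-logarithmic-moment hypothesis on the coefficients rules out pathological cancellation; this is exactly the content of Pritsker's \cite{Pr}, whose result I would invoke directly rather than redo the delicate probabilistic estimates.

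Once the pointwise convergence of $\frac{1}{n}\log|P_n(z)|$ is in hand, Helly's theorem extracts weak-$*$ subsequential limits $\tau^*$ of $\{\tau_n\}$, and the identity $-p_{\tau^*}(z) = g_K(z,\infty) + \log\textup{cap}(K) = -p_{\nu_K}(z)$ holds quasi-everywhere on $\C\setminus K$, so $p_{\tau^*} \equiv p_{\nu_K}$ quasi-everywhere. The unicity theorem for logarithmic potentials then forces $\tau^* = \nu_K$; since the limit is independent of the subsequence, $\tau_n \to \nu_K$ weakly a.s., and \eqref{3.10} follows as described.
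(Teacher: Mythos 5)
Your proposal is correct and follows essentially the same route as the paper: both reduce the statement to the almost sure weak convergence $\tau_n\to\nu_K$ supplied by Pritsker's result in \cite{Pr}, use $\nu_K(\partial E)=0$ to get $\tau_n(E)\to\nu_K(E)$ a.s.\ (the paper cites Landkof and Billingsley where you cite the Portmanteau theorem, but this is the same fact), and conclude by bounded convergence since $0\leq\tau_n(E)\leq 1$. The additional potential-theoretic sketch you give of how the weak convergence is proved is accurate but, as you note yourself, ultimately deferred to \cite{Pr}, exactly as in the paper.
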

\begin{proof}
Consider the normalized counting measure $\tau_n=\frac{1}{n}\sum_{k=1}^n \delta_{z_k}$ for a polynomial \eqref{2.1}, where $\{z_k\}_{k=1}^n$ are the zeros of that polynomial, and $\delta_z$ denotes the unit point mass at $z$. Theorem 2.2 of \cite{Pr} implies that measures $\tau_n$ converge weakly to $\nu_K$ with probability one. Since $\nu_K(\partial E)=0,$ we obtain that $\tau_n\vert_E$ converges weakly to $\nu_K\vert_E$ with probability one by Theorem $0.5^\prime$ of \cite{La} and Theorem 2.1 of \cite{Bi}. In particular, we have that the random variables $\tau_n(E)\rightarrow\nu_K(E)$ a.s. Hence this convergence holds in $L^p$ sense by the Dominated Convergence Theorem, as $\tau_n(E)$ are uniformly bounded by 1, see Chapter 5 of \cite{Gut}. It follows that
\[
\lim_{n\to\infty} \E[|\tau_n(E) - \nu_K(E)|] = 0
\]
for any compact set $E$ such that $\nu_K(\partial E)=0,$ and
\[
\left|\E[\tau_n(E) - \nu_K(E)]\right| \le \E[|\tau_n(E) - \nu_K(E)|] \to 0 \quad\text{as } n\to\infty.
\]
But $\E[\tau_n(E)]=\E[N_n(E)]/n$ and $\E[\nu_K(E)]=\nu_K(E),$ which immediately gives \eqref{3.10}.
\end{proof}

\begin{proof}[Proof of Theorem \ref{thm2.1}]
Given any $\eps>0$, we find a closed set $S$ satisfying the assumptions, and obtain from Theorem \ref{thm2.2} that
\[
\lim_{n\rightarrow\infty} \frac{1}{n}\E\left[N_n\left([a,b]\right)\right] = \frac{1}{\sqrt{3}} \nu_{K}([a,b])
\]
for any interval $[a,b]\subset K^\circ\setminus S$, where $K^\circ$ is the interior of $K$. Note that both $\E\left[N_n\left(H\right)\right]$ and $\nu_{K}(H)$ are additive functions of the set $H$. Moreover, they both vanish when $H$ is a single point by \eqref{3.10}, because $\nu_K$ is absolutely continuous with respect to Lebesgue measure on $K$, see \cite[Lemma 4.4.1, p. 117]{ST}. Hence \eqref{3.10} gives that
\[
\lim_{n\rightarrow\infty} \frac{1}{n}\E\left[N_n\left(\R\setminus S\right)\right] = \frac{1}{\sqrt{3}} \nu_{K}(\R\setminus S).
\]
We can find finitely many open intervals $I_k\subset\R,\ k=1,\ldots,m,$ covering $S$, with total length $\sum_{k=1}^m |I_k| < 2\eps.$ Let $R_k=\{x+iy:x\in I_k,\ |y|<1\},\ k=1,\ldots,m$, so that for $R=\cup_{k=1}^m R_k$ we have $S\subset R$ and $\nu_K(\partial R)=0$. Applying Lemma \ref{lem3.3} again, we obtain that
\[
\limsup_{n\rightarrow\infty} \frac{1}{n}\E\left[N_n\left(S\right)\right] \le \limsup_{n\rightarrow\infty} \frac{1}{n}\E\left[N_n\left(\overline R\right)\right] = \nu_K(\overline R \cap \R) = \nu_K\left(\cup_{k=1}^m \overline{I_k}\right),
\]
Absolute continuity of $\nu_K$ with respect to $dx$ implies that the last term in the above estimate tends to 0 as $\eps\to 0$. Thus \eqref{2.3} follows.
\end{proof}

\bibliographystyle{amsplain}

\begin{thebibliography}{00}
\bibitem{BRS} A. T. Bharucha-Reid and M. Sambandham, \textit{Random Polynomials}, Academic Press, Orlando, 1986.

\bibitem{BP}  A. Bloch and G. P\'olya, \textit{On the roots of certain algebraic equations}, Proc. London Math. Soc. {\bf 33} (1932), 102--114.

\bibitem{Bi} P. Billingsley, \textit{Convergence of Probability Measures}, John Wiley \& Sons, Inc., New York, 1999.

\bibitem{Da1} M. Das, \textit{Real zeros of a random sum of orthogonal polynomials}, Proc. Amer. Math. Soc. {\bf 27} (1971), 147--153.

\bibitem{Da2} M. Das, \textit{The average number of real zeros of a random trigonometric polynomial}, Proc. Camb. Phil. Soc. {\bf 64} (1968), 721--729.

\bibitem{DB} M. Das and S. S. Bhatt, \textit{Real roots of random harmonic equations}, Indian J. Pure Appl. Math. {\bf 13} (1982), 411--420.

\bibitem{EK} A. Edelman and E. Kostlan, \textit{How many zeros of a random polynomial are
real?}, Bull. Amer. Math. Soc. {\bf 32} (1995), 1--37.

\bibitem{EO}  P. Erd\H{o}s and A. C. Offord, \textit{On the number of real roots of a random algebraic equation}, Proc. London Math. Soc. {\bf 6} (1956), 139--160.

\bibitem{Fa} K. Farahmand, \textit{Topics in Random Polynomials}, Pitman Res. Notes Math. {\bf 393} (1998).

\bibitem{Fa1} K. Farahmand, \textit{Level crossings of a random orthogonal polynomial}, Analysis {\bf 16} (1996), 245--253.

\bibitem{Fa2} K. Farahmand, \textit{On random orthogonal polynomials}, J. Appl. Math. Stochastic Anal. {\bf 14} (2001), 265--274.

\bibitem{Fr} G. Freud, \textit{Orthogonal Polynomials}, Akademiai Kiado/Pergamon Press, Budapest, 1971.

\bibitem{GR} I. S. Gradshteyn and I. M. Ryzhik, \textit{Table of Integrals, Series and Products}, Academic Press, Burlington, 2007.

\bibitem{Gut} A. Gut, \textit{Probability: A Graduate Course}, Springer, New York, 2005.

\bibitem{IM1} I. A. Ibragimov and N. B. Maslova, \textit{The average number of zeros of random polynomials}, Vestnik Leningrad University {\bf 23} (1968), 171--172.

\bibitem{IM2} I. A. Ibragimov and N. B. Maslova, \textit{The mean number of real zeros of random polynomials. I. Coefficients with zero mean}, Theory Probab. Appl. {\bf 16} (1971), 228--248.

\bibitem{Ka1} M. Kac, \textit{On the average number of real roots of a random algebraic equation}, Bull. Amer. Math. Soc. {\bf 49} (1943), 314--320.

\bibitem{Ka2} M. Kac, \textit{On the average number of real roots of a random algebraic equation. II}, Proc. London Math. Soc. {\bf 50} (1948), 390--408.

\bibitem{Ka3} M. Kac, \textit{Nature of probability reasoning}, Probability and related topics in physical sciences, Proceedings of the Summer Seminar, Boulder, Colo., 1957, Vol. I Interscience Publishers, London-New York, 1959.

\bibitem{La} N. S. Landkof, \textit{Foundations of Modern Potential Theory}, Springer-Verlag, New York - Heidelberg, 1972.

\bibitem{LO1} J. E. Littlewood and A. C. Offord, \textit{On the number of real roots of a random algebraic equation}, J. Lond. Math. Soc. {\bf 13} (1938), 288--295.

\bibitem{LO2} J. E. Littlewood and A. C. Offord, \textit{On the number of real roots of a random algebraic equation. II}, Proc. Camb. Philos. Soc. {\bf 35} (1939), 133--148.

\bibitem{Lu1} D. S. Lubinsky, \textit{A new approach to universality limits involving orthogonal polynomials}, Ann. Math. {\bf 170} (2009), 915--939.

\bibitem{Lu2} D.S. Lubinsky, \textit{Bulk universality holds in measure for compactly supported measures}, J. Anal. Math. {\bf 116} (2012), 219--253.

\bibitem{Pr} I. E. Pritsker, \textit{Zero distribution of random polynomials}, preprint. arXiv:1409.1631

\bibitem{Ra} T. Ransford, \textit{Potential Theory in the Complex Plane}, Cambridge Univ. Press, Cambridge, 1995.

\bibitem{ST} H. Stahl and V. Totik, \textit{General Orthogonal Polynomials}, Cambridge Univ. Press, Cambridge, 1992.

\bibitem{St} D. C. Stevens, \textit{The average number of real zeros of a random polynomial}, Comm. Pure Appl. Math. {\bf 22} (1969), 457--477.

\bibitem{To1} V. Totik, \textit{Asymptotics for Christoffel Functions for general measures on the real line}, J. Anal. Math. {\bf 81} (2000), 283--303.

\bibitem{To2} V. Totik, \textit{Asymptotics of Christoffel functions on arcs and curves}, Adv. Math. {\bf 252} (2014), 114--149.

\bibitem{Tr} N. Trefethen, \textit{Roots of random polynomials on an interval}, \textsc{chebfun} web page http://www.chebfun.org/examples/roots/RandomPolys.html

\bibitem{Wa} Y. J. Wang, \textit{Bounds on the average number of real roots of a random algebraic equation}, Chinese Ann. Math. Ser. A. {\bf 4} (1983), 601--605.

\bibitem{Wi1} J. E. Wilkins, Jr. \textit{An asymptotic expansion for the expected number of real zeros of a random polynomial}, Proc. Amer. Math. Soc. {\bf 103} (1988), 1249--1258.

\bibitem{Wi2} J. E. Wilkins, Jr. \textit{The expected value of the number of real zeros of a random sum of Legendre polynomials}, Proc. Amer. Math. Soc. {\bf 125} (1997), 1531--1536.
\end{thebibliography}

\end{document}